\newtheorem{thm}{Theorem}[section]
\newtheorem{cor}[thm]{Corollary}
\newtheorem{lem}[thm]{Lemma}
\newtheorem{prop}[thm]{Proposition}
\newcommand{\R}{{\mathbb{R}}}
\newcommand{\Z}{{\mathbb{Z}}}
\newcommand{\1}{\partial}
\newcommand{\2}{\overline}
\newcommand{\3}{\varepsilon}
\newcommand{\4}{\widetilde}
\begin{document}
\title{Minimizer of an isoperimetric ratio on\\ 
a metric on $\R^2$ with finite total area} 
\author{Shu-Yu Hsu\\
Department of Mathematics\\
National Chung Cheng University\\
168 University Road, Min-Hsiung\\
Chia-Yi 621, Taiwan, R.O.C.}
\date{April 9, 2011}
\smallbreak \maketitle
\begin{abstract}
Let $g=(g_{ij})$ be a complete Riemmanian metric on $\R^2$ with finite total 
area and $I_g=\inf_{\gamma}I(\gamma)$ with 
$I(\gamma)=L(\gamma)(A_{in}(\gamma)^{-1}
+A_{out}(\gamma)^{-1})$ where $\gamma$ is any closed simple curve in $\R^2$,
$L(\gamma)$ is the length of $\gamma$, $A_{in}(\gamma)$ 
and $A_{out}(\gamma)$ are the areas of the regions inside and outside 
$\gamma$ respectively, with respect to the metric $g$. Under some mild 
growth conditions on $g$ we prove the existence of a minimizer for $I_g$. As 
a corollary we obtain a new proof for the existence of a minimizer 
for $I_{g(t)}$ for any $0<t<T$ when the metric 
$g(t)=g_{ij}(\cdot,t)=u\delta_{ij}$ is the maximal solution of the Ricci 
flow equation $\1 g_{ij}/\1 t=-2R_{ij}$ on $\R^2\times (0,T)$ \cite{DH} where 
$T>0$ is the extinction time of the solution.
\end{abstract}

\vskip 0.2truein

Key words: existence of minimizer, isoperimetric ratio, complete 
Riemannian metric on $\R^2$, finite total area 

AMS Mathematics Subject Classification: Primary 58E99, 49Q99 Secondary 58C99
\vskip 0.2truein
\setcounter{equation}{0}
\setcounter{section}{-1}

\setcounter{equation}{0}
\setcounter{thm}{0}

Isoperimetric inequalities arises in many problems on analysis and geometry
such as the study of partial differential equations and
Sobolev inequality \cite{B}, \cite{SY}, \cite{T1}. Isoperimetric 
inequalities are also used by N.S.~Trudinger \cite{T2} in the 
study of sharp estimates for the Hessian equations and Hessian integrals.
In \cite{G}, \cite{H1}, M.~Gage and R.~Hamilton studied isoperimetric 
inequalities arising from the curve shortening flow. In \cite{DH}, 
\cite{DHS} and \cite{H2}, P.~Daskalopoulos, R.~Hamilton, N.~Sesum, studied
isoperimetric inequalities in Ricci flow and used it to study the behavior
of solutions of Ricci flow which is an important tool in the classification
of manifolds \cite{MT}, \cite{P1}, \cite{P2}, \cite{Z}. 

Let $g=(g_{ij})$ be a complete Riemmanian metric on $\R^2$ with finite total 
area $A=\int_{\R^2}\,dV_g$ satisfying
\begin{equation}
\lambda_1(|x|)\delta_{ij}\le g_{ij}(x)\le\lambda_2(|x|)\delta_{ij}\quad
\forall |x|\ge r_0
\end{equation} 
for some constant $r_0>1$ and positive monotone decreasing functions 
$\lambda_1(r)$, $\lambda_2(r)$, on $[r_0,\infty)$ that satisfy 
\begin{equation}
\int_r^{c_0r}\sqrt{\lambda_1(\rho)}\,d\rho\ge\pi r\sqrt{\lambda_2(r)}
\quad\forall r\ge r_0,
\end{equation}
\begin{equation}
r\sqrt{\lambda_1(c_0r)}\ge b_1\int_r^{\infty}\rho\lambda_2(\rho)
\,d\rho\quad\forall r\ge r_0,
\end{equation}
\begin{equation}
\int_r^{r^2}\sqrt{\lambda_1(\rho)}\,d\rho\ge b_2\quad\forall r\ge r_0,
\end{equation}
and
\begin{equation}
\lambda_1(c_0r)\ge\delta\lambda_2(r)\quad\forall r\ge r_0
\end{equation}
for some constants $c_0>1$, $b_1>0$, $b_2>0$, $\delta>0$, where $|x|$ 
is the distance of $x$ from the origin with respect to the Euclidean metric.
For any closed simple curve $\gamma$ in $\R^2$, let (cf. \cite{DH})
\begin{equation}\label{isoperimetric-ratio}
I(\gamma)
=L(\gamma)\left(\frac{1}{A_{in}(\gamma)}+\frac{1}{A_{out}(\gamma)}\right)
\end{equation}
where $L(\gamma)$ is the length of the curve $\gamma$, $A_{in}(\gamma)$ 
and $A_{out}(\gamma)$ are the areas of the regions inside and outside 
$\gamma$ respectively, with respect to the metric $g$. Let
\begin{equation}\label{isoperimetric-ratio-inf}
I=I_g=\inf_{\tiny\begin{array}{c}\gamma\end{array}}I(\gamma)
\end{equation}
where the infimum is over all closed simple curves $\gamma$ in $\R^2$. 
In this paper we will prove that there exists a constant $b_0>0$ such 
that if the isoperimetric ratio $I_g<b_0$, then there exists a 
closed simple curve $\gamma$ satisfying $I_g=I(\gamma)$. As a 
corollary we obtain a new proof for the existence of a 
minimizer for the isoperimetric ratio $I_{g(t)}$ for any 
$0<t<T$ when the metric $g(t)=g_{ij}(\cdot,t)=u\delta_{ij}$ is the 
maximal solution of the Ricci flow \cite{DH} 
\begin{equation*}
\frac{\1 }{\1 t}g_{ij}=-2R_{ij}\quad\mbox{ on }\R^2\times (0,T)
\end{equation*} 
where $T>0$ is the extinction time of the solution and $u$ is a solution
of 
\begin{equation}\label{ricci-eqn}
u_t=\Delta\log u\quad\mbox{ on }\R^2\times (0,T).
\end{equation} 
We will use an adaptation of the technique of \cite{H1} and \cite{H2} to 
prove the result. In \cite{H1}, \cite{H2}, since the domain under 
consideration is either the sphere $S^2$ (\cite{H2}) or bounded domain in 
$\R^2$ (\cite{H1}), the minimizing sequences for the infimum of the 
isoperimetric ratios considered in \cite{H1}, \cite{H2}, stay in a compact 
set. On the other hand since the isoperimetric ratio 
\eqref{isoperimetric-ratio} is for any curve $\gamma$ in $\R^2$, the 
minimizing sequence of curves for the infimum of the isoperimetric 
ratio \eqref{isoperimetric-ratio-inf} may not stay in a compact subset of
$\R^2$ and may not have a limit at all. So we will need to show that there
exists a constant such that this is impossible when $I_g$ is less than this
constant. After this we will use the curve shortening flow technique of
\cite{H2} to modify the minimizing sequence of curves and show that they will
converge to a minimizer of \eqref{isoperimetric-ratio-inf}. 

For any $x_0\in\R^2$ and $r>0$ let $B_r(x_0)=\{x\in\R^2:|x-x_0|<r\}$ and 
$B_r=B_r(0)$. The main results of the paper are as follows.

\begin{thm}
Suppose $g$ satisfies (1) for some constant $r_0>1$ where $\lambda_1(r)$, 
$\lambda_2(r)$, are positive monotone decreasing functions on 
$[r_0,\infty)$ that satisfy (2), (3), (4) and (5) for some constants 
$c_0>1$, $b_1>0$, $b_2>0$ and $\delta>0$. Then there exists a constant 
$b_0>0$ depending on $b_1$, $b_2$ and $A$ such that the following holds.
If 
\begin{equation}\label{isoperimetric-ratio-condition}
I_g<b_0,
\end{equation} 
then there exists a closed simple curve 
$\gamma$ in $\R^2$ such that $I_g=I(\gamma)$. Hence $I_g>0$.
\end{thm}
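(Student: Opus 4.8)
\proof
The plan is to apply the direct method: take a minimizing sequence of closed simple curves $\{\gamma_k\}$ with $I(\gamma_k)\to I_g$ and show that, after a regularization, it subconverges to a simple closed curve realizing the infimum. Since $\R^2$ is noncompact the real content is to prevent the $\gamma_k$ from escaping to infinity or spreading out, and this is exactly where the growth hypotheses (2)--(5) and the assumption $I_g<b_0$ enter. Once the $\gamma_k$ are confined to a fixed ball $\overline{B_R}$, I would run the curve shortening flow as in \cite{H2} to obtain uniform length and curvature control and then extract a convergent subsequence whose limit is the desired minimizer.

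First I would record two crude but decisive inequalities. Since $A_{in}+A_{out}=A$ one has $\frac{1}{A_{in}}+\frac{1}{A_{out}}=\frac{A}{A_{in}A_{out}}\ge\frac{4}{A}$, hence $I(\gamma)\ge\frac{4}{A}L(\gamma)$; so $I_g<b_0$ forces every near-minimizer to satisfy $L(\gamma)<\frac{A}{4}b_0$, a uniform length bound. Next, by (4) a closed curve that reaches both radius $\le r$ and radius $\ge r^2$ for some $r\ge r_0$ must have length $\ge 2b_2$, so choosing $b_0\le 8b_2/A$ prevents any near-minimizer from spanning such a radial range, i.e. bounds the ratio of its outer to its inner radius. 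It then remains only to bound the absolute location.

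The heart of the argument is to show that a curve lying entirely far out has large $I$. Suppose $\gamma\subset\{|x|\ge r\}$ with $r$ large. If the origin lies outside $\gamma$ then $A_{in}\le E(r):=\int_{|x|>r}dV_g$, while if the origin lies inside $\gamma$ then $A_{out}=A-A_{in}\le E(r)$; in either case one of the two enclosed areas is at most the tail mass $E(r)$, which tends to $0$ as $r\to\infty$ by finiteness of $A$. I would then use (1), (2) and (5) to establish an isoperimetric inequality $L(\gamma)^2\ge c\,\min(A_{in},A_{out})$ valid for curves in $\{|x|\ge r\}$, by comparing $\gamma$ to the coordinate circles: (2) guarantees that a radial crossing of the annulus $\{r\le|x|\le c_0r\}$ costs at least $\pi r\sqrt{\lambda_2(r)}$, i.e. at least half the $g$-length of $\{|x|=r\}$, while (5) supplies the comparability $\lambda_1(c_0r)\ge\delta\lambda_2(r)$ needed to close the comparison across the annulus. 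Feeding in (3), which bounds $E(r)$ by $c\,r\sqrt{\lambda_1(c_0r)}/b_1$, turns this into a lower bound on $I(\gamma)=L(\gamma)(A_{in}^{-1}+A_{out}^{-1})$ that blows up as $r\to\infty$. Thus there is $R_1=R_1(b_1,A)$ with $I(\gamma)\ge b_0$ whenever $\gamma\subset\{|x|\ge R_1\}$, so any near-minimizer has inner radius $\le R_1$; the radial-ratio bound from (4) then bounds its outer radius, confining every near-minimizer to a fixed ball $\overline{B_R}$.

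With the minimizing sequence trapped in the compact set $\overline{B_R}$ and of uniformly bounded length, I would finish as in \cite{H1}, \cite{H2}: apply the curve shortening flow for a short time to each $\gamma_k$, which does not increase $I$ and yields smooth curves with interior curvature estimates, and then pass to a subsequence converging in $C^{1}$ to a closed curve $\gamma_\infty\subset\overline{B_R}$. Lower semicontinuity of length and continuity of the enclosed areas give $I(\gamma_\infty)\le\liminf_k I(\gamma_k)=I_g$, so $\gamma_\infty$ is a minimizer; one must still check that $\gamma_\infty$ is simple and nondegenerate, which is where the a priori lower bounds on $A_{in}$ and $A_{out}$ coming from the confinement step are used. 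Finally, since the minimizer has $L(\gamma_\infty)>0$ and finite positive areas, $I_g=I(\gamma_\infty)>0$. The main obstacle is the confinement step, and within it the derivation of the near-infinity isoperimetric inequality from (2) and (5); the subsequent curve shortening convergence is a relatively routine adaptation of \cite{H2}.
\endproof
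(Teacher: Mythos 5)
Your overall strategy (confine a minimizing sequence, regularize by curve shortening flow, extract a limit, check simplicity) is the same as the paper's, and several preliminary steps are correct: the length bound $L(\gamma)\le (A/4)\,I(\gamma)$, the use of (4) to bound the radial spread, and the observation that for a curve in $\{|x|\ge r\}$ one of the two areas is at most the tail mass $E(r)$. However, the step you yourself call the heart of the argument contains a genuine error. The claimed inequality $L(\gamma)^2\ge c\,\min\bigl(A_{in}(\gamma),A_{out}(\gamma)\bigr)$ for all curves $\gamma\subset\{|x|\ge r\}$, and the consequent lower bound for $I(\gamma)$ that ``blows up as $r\to\infty$,'' are false for exactly the metrics the theorem is aimed at. Take $g_{ij}=C\,(|x|\log|x|)^{-2}\delta_{ij}$ for large $|x|$ (the cigar-type metrics of Proposition 2 and Corollary 3) and $\gamma=\partial B_R$: then $L(\partial B_R)=2\pi\sqrt{C}/\log R$ while $A_{out}(\partial B_R)=2\pi C/\log R$, so $L^2/\min(A_{in},A_{out})=2\pi/\log R\to 0$ and $I(\partial B_R)\to 1/\sqrt{C}$, a finite constant. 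Hence no quadratic isoperimetric inequality holds for far-out curves enclosing the origin, and the infimum of $I$ over curves in $\{|x|\ge r\}$ does \emph{not} tend to infinity; for such curves the best available bound is the linear (Cheeger-type) one that hypothesis (3) encodes, namely $L\ge b_1A_{out}$, giving only $I\ge b_1$. This is precisely why $b_0$ must be chosen $\le b_1$ and why the theorem assumes $I_g<b_0$ rather than merely $I_g<\infty$; your mechanism, if it worked, would need no such constant.

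The paper's Lemma 4 shows what a correct confinement argument requires: a case analysis that you cannot avoid. First (Claim 1), if a near-minimizing curve touches $\partial B_{r_k}$ and also exits $B_{c_0r_k}$, then by (2) it is at least as long as the coordinate circle $\partial B_{r_k}$, and after the area-swapping comparison the circle has ratio $\ge b_1$ by (3); this traps the curve in the annulus $\overline{B}_{c_0r_k}\setminus B_{r_k}$. For trapped curves enclosing the origin, (3) again gives only $I\ge b_1$. Only in the remaining case --- curve trapped in the annulus and origin \emph{outside} it --- does one get a blow-up, via $L\ge\sqrt{\lambda_1(c_0r_k)}\,L_e$, $A_{in}\le\lambda_2(r_k)|\Omega_k|$, the Euclidean isoperimetric inequality and (5), yielding $L^2\ge 4\pi\delta A_{in}$ and $I\ge 2\sqrt{\pi\delta}\,A_{in}^{-1/2}\to\infty$. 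Note that even this quadratic step needs the trapping first: since $\lambda_1$ decays, the comparison $g_{ij}\ge\lambda_1(c_0r_k)\delta_{ij}$ along the curve fails if the curve wanders beyond $B_{c_0r_k}$, so an inequality ``valid for curves in $\{|x|\ge r\}$'' is doubly out of reach. A secondary gap: at the end you leave simplicity of the limit ``to be checked''; area lower bounds do not give this, and the paper needs a separate device (Lemma 10, an algebraic inequality that splits a self-tangent limit into two loops, one of which is a simple minimizer with the same ratio).
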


\begin{prop}
Suppose $g=(g_{ij})$ satisfies 
\begin{equation*}\label{upper-lower-bd-metric}
\frac{C_1}{r^2(\log r)^2}\delta_{ij}\le g_{ij}
\le\frac{C_2}{r^2(\log r)^2}\delta_{ij}\quad\forall r\ge r_1
\end{equation*} 
for some constants $C_2\ge C_1>0$, $r_1>1$. Then there exist constants 
$c_0>1$, $\delta>0$, $b_1>0$, $b_2>0$, and $r_0\ge r_1$ such that (2), 
(3), (4) and (5) hold.
\end{prop}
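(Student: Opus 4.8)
The plan is to verify conditions (2)--(5) directly with the explicit choice $\lambda_1(r)=C_1/(r^2(\log r)^2)$ and $\lambda_2(r)=C_2/(r^2(\log r)^2)$, both of which are positive and monotone decreasing on $[r_1,\infty)$ since $r^2(\log r)^2$ is increasing for $r>1$. Everything reduces to two elementary antiderivatives, namely $\int\frac{d\rho}{\rho\log\rho}=\log\log\rho$ and $\int\frac{d\rho}{\rho(\log\rho)^2}=-\frac{1}{\log\rho}$. Using these, each integral appearing in (2)--(4) has a closed form, after which the inequalities become comparisons of explicit elementary functions of $r$, $c_0$, $C_1$, $C_2$, and the whole task becomes choosing the constants in the right order.

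First I would dispose of (4), which is the cleanest: since $\sqrt{\lambda_1(\rho)}=\sqrt{C_1}/(\rho\log\rho)$,
$$\int_r^{r^2}\sqrt{\lambda_1(\rho)}\,d\rho=\sqrt{C_1}\bigl(\log\log r^2-\log\log r\bigr)=\sqrt{C_1}\log 2$$
for every $r>1$, so (4) holds for all $r\ge r_1$ with $b_2=\sqrt{C_1}\log 2$, independently of the remaining constants. Next I would treat (2), which is the step that pins down $c_0$. Computing its left side the same way gives $\sqrt{C_1}\log\bigl(1+\frac{\log c_0}{\log r}\bigr)$, while the right side is $\pi\sqrt{C_2}/\log r$; writing $s=\log r$, condition (2) is equivalent to $\sqrt{C_1}\,s\log\bigl(1+\frac{\log c_0}{s}\bigr)\ge\pi\sqrt{C_2}$. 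The function $h(s)=s\log(1+\frac{a}{s})$ with $a=\log c_0$ satisfies $h'(s)=\log(1+\frac{a}{s})-\frac{a}{s+a}>0$ (since $\log(1+x)>\frac{x}{1+x}$ for $x>0$), so $h$ is increasing and tends to $a$ as $s\to\infty$. I would therefore choose $c_0$ with $\log c_0>\pi\sqrt{C_2}/\sqrt{C_1}$, e.g. $\log c_0=2\pi\sqrt{C_2}/\sqrt{C_1}$, and then take $r_0\ge r_1$ large enough that $h(\log r_0)\ge\pi\sqrt{C_2}/\sqrt{C_1}$; monotonicity of $h$ then yields (2) for all $r\ge r_0$.

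With $c_0$ and $r_0$ now fixed, conditions (3) and (5) follow by the same explicit computation together with the monotonicity of $\log r/\log(c_0r)=\log r/(\log r+\log c_0)$, which increases to $1$ and hence attains its minimum over $[r_0,\infty)$ at $r=r_0$. For (3), one has $r\sqrt{\lambda_1(c_0r)}=\sqrt{C_1}/(c_0\log(c_0r))$ and $\int_r^\infty\rho\lambda_2(\rho)\,d\rho=C_2/\log r$, so (3) reads $\frac{\sqrt{C_1}}{c_0C_2}\cdot\frac{\log r}{\log(c_0r)}\ge b_1$ and holds with $b_1=\frac{\sqrt{C_1}}{c_0C_2}\cdot\frac{\log r_0}{\log r_0+\log c_0}>0$. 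Likewise (5) becomes $\frac{C_1}{c_0^2C_2}\bigl(\frac{\log r}{\log(c_0r)}\bigr)^2\ge\delta$ and holds with $\delta=\frac{C_1}{c_0^2C_2}\bigl(\frac{\log r_0}{\log r_0+\log c_0}\bigr)^2>0$.

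The only genuine obstacle is (2): it is the single condition that cannot be met merely by shrinking a free constant, because its right side carries no adjustable parameter, and it forces $c_0$ to exceed a threshold determined by the ratio $C_2/C_1$. The remaining care is bookkeeping on the order of the choices — $c_0$ first (from (2)), then $r_0$ (also from (2)), and finally $b_1$, $b_2$, $\delta$ — so that no later choice invalidates an earlier inequality; this is automatic here since (3), (4) and (5) only constrain the constants chosen last.
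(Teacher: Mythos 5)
Your proof is correct and follows essentially the same route as the paper: explicit antiderivatives reduce (2)--(5) to elementary inequalities, (4) gives $b_2=\sqrt{C_1}\log 2$ exactly as in the paper, and (2) forces $\log c_0>\pi\sqrt{C_2/C_1}$ before $r_0$ and the remaining constants are chosen. The only cosmetic differences are that you exploit monotonicity of $s\log(1+\log c_0/s)$ and of $\log r/\log(c_0r)$ where the paper instead uses the limit $\lim_{r\to\infty}(\log r)\log\bigl(\log(c_0r)/\log r\bigr)=\log c_0$ and a uniform bound $\log r/\log(c_0r)\ge 1/\sqrt{2}$, yielding slightly different but equally valid values of $b_1$ and $\delta$.
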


\begin{cor}
Let $g_{ij}(x,t)=u(x,t)\delta_{ij}$ where $u$ is the maximal solution of
\eqref{ricci-eqn} with initial value $0\le u_0\in L^p(\R^2)\cap L^1(\R^2)$,
$u_0\not\equiv 0$, for some $p>1$ satisfying 
\begin{equation}\label{initial-condition}
u_0(x)\le\frac{C}{|x|^2 (\log|x|)^2}\quad\forall |x|>1
\end{equation}    
given by \cite{DP} and \cite{Hu} where $T=(1/4\pi)\int_{\R^2}u_0\,dx$. 
Then for any $0<t_1<T$ there exists a constant $b_0>0$ such that the following
holds. For any $t_1\le t<T$, if $I_{g(t)}<b_0$, then there exists a closed 
simple curve $\gamma$ that satisfies $I_{g(t)}=I(\gamma)$.
\end{cor}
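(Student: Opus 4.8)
The plan is to obtain the Corollary by checking, for every fixed $t\in[t_1,T)$, that the conformal metric $g(t)=u(\cdot,t)\delta_{ij}$ satisfies the hypothesis of Proposition 1, and then running Proposition 1 followed by Theorem 1. Concretely, I would first produce a two-sided tail estimate
\[
\frac{C_1}{|x|^2(\log|x|)^2}\le u(x,t)\le\frac{C_2}{|x|^2(\log|x|)^2}\qquad\forall\,|x|\ge r_1,
\]
with constants $C_2\ge C_1>0$ and $r_1>1$ that can be chosen independently of $t$ on $[t_1,T)$. Proposition 1 then supplies $c_0>1$, $\delta>0$, $b_1>0$, $b_2>0$ and $r_0\ge r_1$ for which (2)--(5) hold, and Theorem 1 produces a threshold $b_0>0$ depending only on $b_1$, $b_2$ and the total area $A(t)=\int_{\R^2}u(\cdot,t)\,dx$, below which $I_{g(t)}$ is attained. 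Everything reduces to the two displayed tail bounds together with their uniformity in $t$.

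For the upper bound I would use a comparison argument. A direct computation gives $\Delta\log\bigl(|x|^{-2}(\log|x|)^{-2}\bigr)=2|x|^{-2}(\log|x|)^{-2}$, so $\bar u(x,t)=(C+2t)\,|x|^{-2}(\log|x|)^{-2}$ satisfies $\bar u_t-\Delta\log\bar u=0$ outside a large ball and is therefore a supersolution of $u_t=\Delta\log u$ there; since \eqref{initial-condition} and the local boundedness of $u$ give $u\le\bar u$ on the parabolic boundary of the exterior region, the comparison principle yields $u(x,t)\le(C+2T)\,|x|^{-2}(\log|x|)^{-2}$ for $|x|$ large, a bound uniform on $[0,T)$. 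For the total area I would invoke the standard mass identity $\tfrac{d}{dt}\int_{\R^2}u\,dx=-4\pi$ for the maximal solution, which gives $A(t)=4\pi(T-t)$, decreasing and bounded above by $A(t_1)=4\pi(T-t_1)$ on $[t_1,T)$.

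The lower bound is the delicate half, since \eqref{initial-condition} imposes no lower bound on $u_0$ at all; the tail of $u(\cdot,t)$ for $t>0$ must instead be created by the regularizing effect of the flow. This is exactly where $t_1>0$ is used: I would extract a lower tail estimate $u(x,t)\ge C_1|x|^{-2}(\log|x|)^{-2}$ for $t\ge t_1$ from the Harnack-type inequalities, the strict positivity, and the asymptotic description of the maximal solution established in \cite{DH}, \cite{DP} and \cite{Hu}. With both bounds in hand, Proposition 1 and Theorem 1 apply for each fixed $t$, and it remains only to fix a single $b_0$ that works throughout $[t_1,T)$.

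The main obstacle is precisely this last uniformity as $t\uparrow T$. Because $A(t)=4\pi(T-t)\to0$ at the extinction time, the solution tends to $0$ everywhere, so the naive lower constant $C_1(t)$ degenerates and, tracing Proposition 1 (where $c_0\sim\exp(\pi\sqrt{C_2/C_1})$ and $b_2\sim\sqrt{C_1}$), the resulting threshold would collapse. I would circumvent this using the scale behaviour of the functional: since $I_{\mu g}=\mu^{-1/2}I_g$ for any constant $\mu>0$ and the existence of a minimizer is scale invariant, one may instead apply Theorem 1 to the area-normalized metric $\widetilde g(t)=g(t)/A(t)$, for which the near-extinction asymptotics of \cite{DH}, \cite{DP} and \cite{Hu} pin the rescaled tail constants between fixed positive bounds (and the area is identically $1$). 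This produces a uniform threshold $\widetilde b_0$ for $\widetilde g(t)$, and translating back through $I_{\widetilde g(t)}=A(t)^{1/2}I_{g(t)}$ together with $A(t)\le A(t_1)$ yields a single $b_0=A(t_1)^{-1/2}\widetilde b_0>0$ valid for all $t\in[t_1,T)$. Verifying that these rescaled tail constants are genuinely uniform up to $T$ is the crux of the argument.
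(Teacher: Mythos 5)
Your skeleton---two-sided tail bounds for $u(\cdot,t)$, then the Proposition, then Theorem 1---is exactly the paper's proof of this corollary; the paper simply quotes both bounds from the results of [ERV]: there is $C_2>0$ with $u(x,t)\le C_2|x|^{-2}(\log|x|)^{-2}$ for all $|x|>1$, $0<t<T$, and for every $t_0\in(0,T)$ there is $r_1(t_0)>1$ with
\[
u(x,t)\ge\frac{(3/2)\,t}{|x|^2(\log|x|)^2}\qquad\forall\,|x|\ge r_1(t_0),\ 0<t\le t_0.
\]
(For a fixed $t\in[t_1,T)$ one applies this with any $t_0\in[t,T)$.) Your upper-bound comparison with the exact exterior solution $(C+2t)|x|^{-2}(\log|x|)^{-2}$ and the mass identity $A(t)=4\pi(T-t)$ are fine, and you correctly isolate the lower tail bound as the delicate point, but you leave it as a black box and, more importantly, you misread its structure. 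The constant in the [ERV] lower bound is $(3/2)t\ge(3/2)t_1$: it does \emph{not} degenerate as $t\uparrow T$. What degenerates is only the radius $r_1(t_0)$ beyond which the bound holds. This is harmless, because neither the constants of the Proposition nor the threshold of Theorem 1 ever see that radius: the Proposition gives $c_0=2e^{\pi\sqrt{C_2/C_1}}$, $b_1=\sqrt{C_1}/(\sqrt{2}c_0C_2)$, $b_2=\sqrt{C_1}\log 2$, depending only on $C_1,C_2$, and Theorem 1 gives $b_0=\min(b_1,4b_2/A)$, depending only on $b_1,b_2,A$. Taking $C_1=(3/2)t_1$ and $C_2$ uniform, and using that $A(t)=4\pi(T-t)\le A(t_1)$ is decreasing in $t$, one gets $b_0(t)\ge\min\bigl(b_1,4b_2/A(t_1)\bigr)>0$ for all $t\in[t_1,T)$. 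The uniformity you spend your last paragraph worrying about is therefore automatic; no extra idea is needed.

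Your proposed rescaling repair, on the other hand, is genuinely flawed, in two ways. First, its premise is false: for $\widetilde g(t)=g(t)/A(t)$ the tail constants are $\widetilde C_i=C_i/A(t)=C_i/(4\pi(T-t))\to\infty$ as $t\to T$; they are not pinned between fixed positive bounds (only their ratio is, and $b_1,b_2$ depend on more than the ratio). Second, the maneuver is vacuous because the whole problem is scale-equivariant: tracing the constants, $\widetilde b_0=\min\bigl(\sqrt{A}\,b_1,\,4b_2/\sqrt{A}\bigr)=\sqrt{A(t)}\,b_0(t)$ exactly, while $I_{\widetilde g(t)}=\sqrt{A(t)}\,I_{g(t)}$, so the rescaled criterion $I_{\widetilde g(t)}<\widetilde b_0$ is literally the same condition as $I_{g(t)}<b_0(t)$; a constant conformal rescaling can never buy a better threshold. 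Worse, $\widetilde b_0(t)=\sqrt{A(t)}\,\min(b_1,4b_2/A(t))\to 0$ as $t\uparrow T$, so the uniform positive $\widetilde b_0$ that your final formula $b_0=A(t_1)^{-1/2}\widetilde b_0$ requires does not exist along this route. The correct resolution is the one above: use the precise form of the [ERV] lower bound and observe that $r_1$ never enters $b_0$.
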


\noindent{\it Proof of Proposition 2}: Let
$\lambda_i(r)=C_i(r\log r)^{-2}$, $i=1,2$, 
\begin{equation}
c_0=2e^{\pi\sqrt{C_2/C_1}},
\end{equation}
and $\delta=C_1/(2c_0^2C_2)$. We choose $r_2\ge r_1$ such that
\begin{equation}
\frac{\log r}{\log (c_0r)}\ge\frac{1}{\sqrt{2}}\quad\forall r\ge r_2.
\end{equation}
Then by (11) and (12), 
\begin{equation}
\frac{\lambda_1(c_0r)}{\lambda_2(r)}=\frac{C_1}{c_0^2C_2}
\left(\frac{\log r}{\log (c_0r)}\right)^2\ge\frac{C_1}{2c_0^2C_2}=\delta
\quad\forall r\ge r_2.
\end{equation}
We next note that 
\begin{equation}
\lim_{r\to\infty}\left((\log r)\log\left(\frac{\log (c_0r)}{\log r}\right)
\right) 
=\lim_{z\to 0}\frac{\log((\log c_0)z+1)}{z}=\log c_0.
\end{equation}
By (11) and (14) there exists $r_0\ge r_2$ such that 
\begin{equation}
(\log r)\log\left(\frac{\log (c_0r)}{\log r}\right)>\pi\sqrt{C_2/C_1}
\quad\forall r\ge r_0.
\end{equation}
By (13) and (15), we get (2) and (5). By (12) and a direct computation 
(3) and (4) holds with $b_1=\sqrt{C_1}/(\sqrt{2}c_0C_2)$, $b_2=\sqrt{C_1}
\log 2$, and the proposition follows.
\hfill$\square$\vspace{6pt}

\noindent{\it Proof of Corollary 3}: By \eqref{initial-condition} 
and the results of \cite{ERV} there exists a constant 
$C_2>0$ such that
\begin{equation}\label{u-upper-bd}
u(x,t)\le\frac{C_2}{|x|^2(\log |x|)^2}\quad\forall |x|>1,0<t<T
\end{equation}
and for any $t_0\in (0,T)$ there exists a constant $r_1>1$ such that
\begin{equation}\label{u-lower-bd}
u(x,t)\ge\frac{(3/2)t}{|x|^2(\log|x|)^2}\quad\forall |x|\ge r_1,0<t\le t_0.
\end{equation}
By \eqref{u-upper-bd}, \eqref{u-lower-bd}, Theorem 1 and Proposition 2, 
the corollary follows.
\hfill$\square$\vspace{6pt}

We will now assume that $g$ is a metric on $\R^2$ with finite total area
that satisfies (1), (2), (3), (4) and (5) for some constants $r_0>1$, $c_0>1$, 
$b_1>0$, $b_2>0$, $\delta>0$ where $\lambda_1(r)$, $\lambda_2(r)$, are positive 
monotone decreasing functions on $[r_0,\infty)$ for the rest of the paper. 
Let $b_0=\min (b_1,4b_2/A)$. Suppose \eqref{isoperimetric-ratio-condition} holds. 
Let $\{\gamma_k\}_{k=1}^{\infty}$ be a sequence of closed simple curves on 
$\R^2$ such that 
\begin{equation}\label{sequence-limit}
I(\gamma_k)\to I\quad\mbox{ as }k\to\infty\quad\mbox{ and }\quad
I(\gamma_k)<b_0\quad\forall k\in\Z^+.
\end{equation}
We will show that the sequence $\{\gamma_k\}_{k=1}^{\infty}$ is
contained in some compact set of $\R^2$.
Let $\Omega_k$ be the region inside $\gamma_k$ and $r_k=\min_{x\in\gamma_k}
|x|$. Let $L_e(\gamma_k)$ be the length of $\gamma_k$ and $|\Omega_k|$ 
be the area of $\Omega_k$ with respect to the Euclidean metric.
We choose $r_0'>r_0$ such that 
\begin{equation}\label{area-outside}
\mbox{Vol}_g(\R^2\setminus B_{r_0'})\le\frac{A}{4}\quad\forall k\in\Z^+.
\end{equation}
\begin{lem}
The sequence $r_k$ is uniformly bounded. 
\end{lem}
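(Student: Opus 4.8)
The plan is to argue by contradiction: suppose the sequence $r_k=\min_{x\in\gamma_k}|x|$ is unbounded, so after passing to a subsequence $r_k\to\infty$. The idea is that if a curve $\gamma_k$ lies entirely in the far region $\{|x|\ge r_k\}$ with $r_k$ large, then the metric decay conditions (2)--(5) force $I(\gamma_k)$ to stay bounded below by $b_0$, contradicting \eqref{sequence-limit}. The contradiction should come from showing that for such far-out curves the $g$-length $L(\gamma_k)$ is too large relative to the $g$-areas $A_{in}(\gamma_k)$ and $A_{out}(\gamma_k)$.

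First I would estimate the three geometric quantities for a curve $\gamma_k$ lying in $\{|x|\ge r_k\}$. Since $g_{ij}\ge\lambda_1(|x|)\delta_{ij}$ out there and $\lambda_1$ is decreasing, the $g$-length satisfies $L(\gamma_k)\ge\sqrt{\lambda_1(R_k)}\,L_e(\gamma_k)$ where $R_k=\max_{x\in\gamma_k}|x|$, and by the Euclidean isoperimetric inequality $L_e(\gamma_k)$ controls the Euclidean size of $\Omega_k$. For the inner area, if the origin is outside $\gamma_k$ (which it must be once $r_k>0$ and $\gamma_k$ does not enclose the origin), then $\Omega_k\subset\{|x|\ge r_k\}$ and the upper bound $g_{ij}\le\lambda_2(|x|)\delta_{ij}$ gives $A_{in}(\gamma_k)=\mathrm{Vol}_g(\Omega_k)\le\lambda_2(r_k)|\Omega_k|$, and one bounds $|\Omega_k|$ by $\int_{r_k}^\infty\rho\,d\rho$-type tail integrals so that condition (3) enters. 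For the outer area I would simply use $A_{out}(\gamma_k)\le A$, the finite total area. The main inequality to assemble is then $I(\gamma_k)=L(\gamma_k)(A_{in}^{-1}+A_{out}^{-1})\ge L(\gamma_k)/A_{in}(\gamma_k)$, and I want to show this is at least $b_1\ge b_0$.

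The key mechanism is the interplay between conditions (2) and (3). Condition (2), $\int_r^{c_0r}\sqrt{\lambda_1(\rho)}\,d\rho\ge\pi r\sqrt{\lambda_2(r)}$, is a lower bound on the $g$-length of any curve that must cross the annulus $\{r\le|x|\le c_0r\}$, capturing the fact that a curve enclosing nontrivial $g$-area far out has to be long; condition (3), $r\sqrt{\lambda_1(c_0r)}\ge b_1\int_r^\infty\rho\lambda_2(\rho)\,d\rho$, directly relates a length scale at radius $c_0r$ to the total $g$-area tail beyond $r$, hence to $A_{in}(\gamma_k)$. I expect the argument to split on whether $\gamma_k$ stays inside the annulus $\{r_k\le|x|\le c_0r_k\}$ or escapes it: in the escaping case (2) gives a large length crossing the annulus, and in the confined case the diameter is comparable to $r_k$ and (3) gives the length-to-area ratio directly. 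Combining, $L(\gamma_k)/A_{in}(\gamma_k)\ge b_1\ge b_0$ in either case.

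The hard part will be organizing the case analysis cleanly and getting the geometric inequality $A_{in}(\gamma_k)\le(\text{const})\int_{r_k}^\infty\rho\lambda_2(\rho)\,d\rho$ with a correct constant, since the inner region $\Omega_k$ need not be an annulus and one must pass from the Euclidean area of $\Omega_k$ to the radial tail integral using the isoperimetric inequality and the containment $\Omega_k\subset\{|x|\ge r_k\}$. A subtlety is ensuring the origin lies outside $\gamma_k$ (otherwise "inside" and "outside" trade roles and $A_{in}$ is the large one); for $r_k$ large this should hold, but I would verify it or handle both orientations. Once the two areas and the length are correctly bounded, conditions (2) and (3) are exactly tailored so that the ratio stays above $b_1$, contradicting $I(\gamma_k)<b_0\le b_1$, which forces $r_k$ to be bounded.
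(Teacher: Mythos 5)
Your overall plan (argue by contradiction, show that curves escaping to infinity must have isoperimetric ratio at least $b_0$ using conditions (2), (3), (5)) is indeed the paper's strategy, but the assembly you outline has two genuine gaps, both traceable to conflating the roles of $A_{in}$ and $A_{out}$. The first gap is the case $0\in\Omega_k$, which you dismiss with the claim that for $r_k$ large the origin ``should'' lie outside $\gamma_k$. Nothing forces this: the circles $\partial B_{r_k}$ with $r_k\to\infty$ enclose the origin and are exactly the kind of escaping competitor the lemma must rule out, so this is arguably the main case, and it occupies most of the paper's proof (Claim 1 and the case (21)). When $0\in\Omega_k$ one has $B_{r_k}\subset\Omega_k$, hence $A_{in}(\gamma_k)\ge 3A/4$, and your central inequality $I(\gamma_k)\ge L(\gamma_k)/A_{in}(\gamma_k)\ge b_1$ is hopeless; the small quantity is $A_{out}(\gamma_k)\le\mathrm{Vol}_g(\R^2\setminus B_{r_k})\le 2\pi\int_{r_k}^{\infty}\rho\lambda_2(\rho)\,d\rho$, and what must be proved is the length bound $L(\gamma_k)\ge 2\pi r_k\sqrt{\lambda_1(c_0 r_k)}$, so that (3) yields $I(\gamma_k)\ge L(\gamma_k)/A_{out}(\gamma_k)\ge b_1$. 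Establishing that length bound is precisely where the dichotomy ``confined in $\overline{B}_{c_0r_k}$ versus escaping'' and condition (2) enter: a confined curve winding around the origin has Euclidean length at least $2\pi r_k$ where $\lambda_1(|x|)\ge\lambda_1(c_0r_k)$, while an escaping curve must cross the annulus twice, which by (2) already costs $2\pi r_k\sqrt{\lambda_2(r_k)}$ (the paper packages this as the comparison $I(\partial B_{r_k})\le I(\gamma_k)$ via the area-swap inequality (31)). Your sketch never produces this argument.

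The second gap is in the case you do focus on ($0\notin\overline{\Omega}_k$, curve confined near radius $r_k$): your plan to bound $A_{in}(\gamma_k)$ by the tail integral and let (3) give $L/A_{in}\ge b_1$ cannot close. Condition (3) converts the tail integral into $r_k\sqrt{\lambda_1(c_0r_k)}/b_1$, so you would need $L(\gamma_k)\ge r_k\sqrt{\lambda_1(c_0r_k)}$, which is false when the curve does not enclose the origin: a loop of small Euclidean radius $\varepsilon$ at distance $r_k$ from the origin has $L(\gamma_k)\le C\sqrt{\lambda_2(r_k)}\,\varepsilon$, arbitrarily small. (Your remark that ``the diameter is comparable to $r_k$'' in the confined case holds only when the curve encloses the origin, i.e.\ in the case you excluded.) The correct mechanism here, which the paper uses, is different: by the Euclidean isoperimetric inequality $L(\gamma_k)\ge 2\sqrt{\pi\lambda_1(c_0r_k)|\Omega_k|}$, and since $A_{in}(\gamma_k)\le\lambda_2(r_k)|\Omega_k|$, condition (5) gives $I(\gamma_k)\ge L(\gamma_k)/A_{in}(\gamma_k)\ge 2\sqrt{\pi\delta}\,A_{in}(\gamma_k)^{-1/2}$; then $\Omega_k\subset\R^2\setminus B_{r_k}$ and finiteness of the total area force $A_{in}(\gamma_k)\to 0$, whence $I=\infty$, a contradiction. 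In short, conditions (3) and (5) serve complementary cases --- (3) for curves enclosing the origin, (5) for curves that do not --- whereas your proposal tries to run the whole proof on (3), and applies it in the one case where it cannot work.
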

\begin{proof} Suppose the lemma is not true. Then there 
exists a subsequence of $r_k$ which we 
may assume without loss of generality to be the sequence itself such that 
\begin{equation}\label{rk-lower-bd}
r_k>r_0'\quad\forall k\in\Z^+
\end{equation}
and $r_k\to\infty$ as $k\to\infty$. Let $\4{\gamma}_k=\1 B_{r_k}$.
We choose a point $x_k\in\gamma_k\cap\1 B_{r_k}$ and let 
$\gamma_k:[0,2\pi]\to\R^2$ be a parametrization of the curve 
$\gamma_k$ such that $x_k=\gamma_k(0)=\gamma_k(2\pi)$. 
Since for any $k\in\Z^+$ either $0\in\Omega_k$ or $0\in\R^2\setminus
\Omega_k$ holds, thus either
\begin{equation}\label{0-in}
0\in\Omega_k\quad\mbox{ for infinitely many k}
\end{equation}
or 
\begin{equation}\label{0-out}
0\in\R^2\setminus\Omega_k\quad\mbox{ for infinitely many k}
\end{equation}
holds. We need the following result for the proof of the lemma.

\noindent $\underline{\text{\bf Claim 1}}$: There exists only finitely many
$k$ such that $\gamma_k\cap (\R^2\setminus\2{B}_{c_0r_k})\ne\emptyset$.
 
\noindent {\it Proof of Claim 1}: Suppose claim 1 is false. Then there 
exists infinitely many $k$ such that $\gamma_k\cap 
(\R^2\setminus\2{B}_{c_0r_k})\ne\emptyset$. Without loss of generality we 
may assume that 
\begin{equation}
\gamma_k\cap (\R^2\setminus\2{B}_{c_0r_k})\ne\emptyset\quad\forall k\in\Z^+.
\end{equation}  
By (23) there exists $\phi_0\in (0,2\pi)$ such that
$$
|\gamma_k(\phi_0)|>c_0r_k.
$$
Hence there exists $0<\phi_1<\phi_0<\phi_2<2\pi$ such that
$$
\gamma_k(\phi_1)=\gamma_k(\phi_2)=c_0r_k
$$
and
$$
r_k\le |\gamma_k(\phi)|\le c_0r_k\quad\forall\phi\in (0,\phi_1)
\cup (\phi_2,2\pi).
$$
Then by (1),
\begin{align}
L(\gamma_k)=&\int_0^{2\pi}(g_{ij}\overset{.}\gamma_k^i
\overset{.}\gamma_k^j)^{\frac{1}{2}}\,d\phi
\nonumber\\
\ge&\left(\int_0^{\phi_1}+\int_{\phi_2}^{2\pi}\right)
(g_{ij}\overset{.}\gamma_k^i\overset{.}\gamma_k^j)^{\frac{1}{2}}
\,d\phi\nonumber\\
\ge&\left(\int_0^{\phi_1}+\int_{\phi_2}^{2\pi}\right)
\sqrt{\lambda_1(r)}\sqrt{\biggl(\frac{dr}{d\phi}\biggr)^2
+r^2\biggl(\frac{d\theta}{d\phi}\biggr)^2}\,d\phi\nonumber\\
\ge&2\int_{r_k}^{c_0r_k}\sqrt{\lambda_1(r)}\,dr
\end{align} 
and
\begin{equation}
2\pi r_k\sqrt{\lambda_1(r_k)}
\le L(\4{\gamma}_k)=\int_0^{2\pi}
(g_{ij}\overset{.}{\4{\gamma}_k^i}\overset{.}{\4{\gamma}_k^j})^{\frac{1}{2}}
\,d\phi\le 2\pi r_k\sqrt{\lambda_2(r_k)}.
\end{equation}    
By (2), (24) and (25),
\begin{equation}
L(\4{\gamma}_k)\le L(\gamma_k).
\end{equation} 
Suppose \eqref{0-in} holds. Without loss of generality we may 
assume that $0\in\Omega_k$ for all $k\in\Z^+$. Then 
$B_{r_k}\subset\Omega_k$ for all $k\in\Z^+$. Hence by \eqref{area-outside}, 
\eqref{rk-lower-bd}, 
\begin{equation}
A_{out}(\gamma_k)\le\mbox{Vol}_g(\R^2\setminus B_{r_k})\le\frac{A}{4}
\quad\forall k\in\Z^+
\end{equation}  
and
\begin{equation} 
\frac{3A}{4}\le\mbox{Vol}_g(B_{r_k})\le A_{in}(\gamma_k)\le A
\quad\forall k\in\Z^+.
\end{equation} 
We will now show that the circle $\4{\gamma}_k=\1 B_{r_k}$ satisfies 
\begin{equation}
I(\4{\gamma}_k)\le I(\gamma_k). 
\end{equation}
Let $\3=A_{out}(\4{\gamma}_k)-A_{out}(\gamma_k)$. Then
$\3=A_{in}(\gamma_k)-A_{in}(\4{\gamma}_k)$. Since 
$\4{\gamma}_k\subset\2{\Omega}_k$ and
the region between $\gamma_k$ and $\4{\gamma}_k$ is contained in 
$\R^2\setminus B_{r_k}$, by (27),
\begin{equation}
0\le\3\le\frac{A}{4}.
\end{equation} 
Hence by (27) and (30),
\begin{align}
\frac{1}{A_{in}(\4{\gamma}_k)}+\frac{1}{A_{out}(\4{\gamma}_k)}
=&\frac{A}{A_{in}(\4{\gamma}_k)A_{out}(\4{\gamma}_k)}
=\frac{A}{(A_{in}(\gamma_k)-\3)(A_{out}(\gamma_k)+\3)}\nonumber\\
\le&\frac{A}{A_{in}(\gamma_k)A_{out}(\gamma_k)}
=\frac{1}{A_{in}(\gamma_k)}+\frac{1}{A_{out}(\gamma_k)}.
\end{align} 
By (26) and (31) we get (29). Now by (1),
\begin{equation}
A_{out}(\4{\gamma}_k)=\int_{\R^2\setminus B_{r_k}}\sqrt{\mbox{det}g_{ij}}\,dx
\le 2\pi\int_{r_k}^{\infty}\rho\lambda_2(\rho)\,d\rho.
\end{equation}
By (3), (25), (29) and (32),
\begin{equation}
I(\gamma_k)\ge\frac{L(\4{\gamma}_k)}{A_{out}(\4{\gamma}_k)}
+\frac{L(\4{\gamma}_k)}{A_{in}(\4{\gamma}_k)}\ge b_1.
\end{equation}
Letting $k\to\infty$ in (33),
\begin{equation}
I\ge b_1.
\end{equation}
This contradicts \eqref{isoperimetric-ratio-condition} and the definition of 
$b_0$. Hence \eqref{0-in} does not hold. 

Suppose \eqref{0-out} holds. Without loss of generality we may assume that 
$0\in\R^2\setminus\Omega_k$ for all $k\in\Z^+$. Then by \eqref{rk-lower-bd} 
$0\in\R^2\setminus
\2{\Omega}_k$ and $B_{r_k}\subset\R^2\setminus\2{\Omega}_k$ for any 
$k\in\Z^+$. By an argument similar to the proof
of (27) and (28) but with the role of $A_{in}(\gamma_k)$
and $A_{out}(\gamma_k)$ being interchanged in the proof we get
\begin{equation}
\left\{\begin{aligned}
&A_{in}(\gamma_k)\le\mbox{Vol}_g(\R^2\setminus B_{r_k})\le\frac{A}{4}
\quad\forall k\in\Z^+\\
&\frac{3A}{4}\le A_{out}(\gamma_k)\le A\qquad\qquad\quad\forall k\in\Z^+.
\end{aligned}\right.
\end{equation}
Similarly by interchanging the role of $A_{in}(\gamma_k)$
and $A_{out}(\gamma_k)$ and replacing $\3$ by $\3'
=A_{out}(\4{\gamma}_k)-A_{in}(\gamma_k)=A_{out}(\gamma_k)
-A_{in}(\4{\gamma}_k)$ in the proof of (29)--(33) above, we get that 
$0\le\3'\le A/4$ and (29), (33), still holds. Letting $k\to\infty$ in 
(33), we get (34). This again contradicts 
\eqref{isoperimetric-ratio-condition} and the definition of 
$b_0$. Thus \eqref{0-out} does not hold and claim 1 follows. 

We will now continue with the proof of the lemma. By claim 1 there exists 
$k_0\in\Z^+$ such that
\begin{align}
&\gamma_k\cap (\R^2\setminus\2{B}_{c_0r_k})=\emptyset\quad\forall k\ge k_0
\nonumber\\
\Rightarrow\quad&\gamma_k\subset\2{B}_{c_0r_k}\setminus B_{r_k}
\quad\forall k\ge k_0.
\end{align}
Note that either \eqref{0-in} or \eqref{0-out} holds. Suppose \eqref{0-in} holds.
Without loss of generality we may assume that $0\in\Omega_k$ for all 
$k\ge k_0$. Then $B_{r_k}\subset\Omega_k$ for all $k\ge k_0$. Hence
by (1) and (36),
\begin{align}
L(\gamma_k)=&\int_0^{2\pi}(g_{ij}\overset{.}\gamma_k^i\overset{.}\gamma_k^j)
^{\frac{1}{2}}\,d\phi\nonumber\\
\ge&\sqrt{\lambda_1(c_0r_k)}
\int_0^{2\pi}\left(\left(\frac{dr}{d\phi}\right)^2+r^2\left(
\frac{d\theta}{d\phi}\right)^2\right)^{\frac{1}{2}}\,d\phi\nonumber\\
\ge&2\pi r_k\sqrt{\lambda_1(c_0r_k)}\quad\forall k\ge k_0
\end{align}
and
\begin{equation}
A_{out}(\gamma_k)\le\int_{\R^2\setminus B_{r_k}}\sqrt{\mbox{det}g_{ij}}\,dx
\le 2\pi\int_{r_k}^{\infty}\rho\lambda_2(\rho)\,d\rho
\quad\forall k\ge k_0.
\end{equation}
By (3), (37) and (38),
\begin{equation}
I(\gamma_k)\ge\frac{L(\gamma_k)}{A_{out}(\gamma_k)}
\ge\frac{r_k\sqrt{\lambda_1(c_0r_k)}}
{\int_{r_k}^{\infty}\rho\lambda_2(\rho)\,d\rho}\ge b_1\quad\forall k\ge k_0.
\end{equation}
Letting $k\to\infty$ in (39), we get (34). Since (34) contradicts (9) and the 
definition of $b_0$, \eqref{0-in} does not hold. Hence \eqref{0-out} holds.
By \eqref{rk-lower-bd} and \eqref{0-out} we may assume without loss of 
generality that $0\in\R^2\setminus\2{\Omega}_k$ for all $k\ge k_0$. 
Then $B_{r_k}\subset\R^2\setminus\2{\Omega}_k$ for all $k\ge k_0$. 
Hence $\Omega_k$ is contractible to a point in 
$\2{B}_{c_0r_k}\setminus B_{r_k}$ for all $k\ge k_0$. By (1), 
\begin{equation}
L(\gamma_k)=\int_0^{2\pi}(g_{ij}\overset{.}\gamma_k^i\overset{.}\gamma_k^j)
^{\frac{1}{2}}\,d\phi\ge\sqrt{\lambda_1(c_0r_k)}L_e(\gamma_k)\quad
\forall k\ge k_0.
\end{equation}
By the isoperimetric inequality,
\begin{equation}
4\pi|\Omega_k|\le L_e(\gamma_k)^2.
\end{equation}
Then by (40) and (41),
\begin{equation}
L(\gamma_k)\ge 2(\pi\lambda_1(c_0r_k)|\Omega_k|)^{\frac{1}{2}}
\quad\forall k\ge k_0.
\end{equation}
Now
\begin{equation}
A_{in}(\gamma_k)=\int_{\Omega_k}\sqrt{\mbox{det}g_{ij}}\,dx
\le\lambda_2(r_k)|\Omega_k|\quad\forall k\ge k_0.
\end{equation}
By (5), (42) and (43),
\begin{align}
&L(\gamma_k)\ge 2\pi^{\frac{1}{2}}\left(\frac{\lambda_1(c_0r_k)}
{\lambda_2(r_k)}\right)^{\frac{1}{2}}A_{in}(\gamma_k)^{\frac{1}{2}}
\ge 2(\pi\delta)^{\frac{1}{2}}A_{in}(\gamma_k)^{\frac{1}{2}}
\quad\forall k\ge k_0\nonumber\\
\Rightarrow\quad&I(\gamma_k)\ge\frac{L(\gamma_k)}{A_{in}(\gamma_k)}
\ge2(\pi\delta)^{\frac{1}{2}}A_{in}(\gamma_k)^{-\frac{1}{2}}
\quad\forall k\ge k_0.
\end{align}
Since $\Omega_k\subset\R^2\setminus B_{r_k}$, 
\begin{equation}
A_{in}(\gamma_k)\to 0\quad\mbox{ as }k\to\infty.
\end{equation}
Letting $k\to\infty$ in (44) by (45) we get $I=\infty$. This contradicts 
\eqref{isoperimetric-ratio-condition}. Hence \eqref{0-out} does not hold 
and the lemma follows. 
\end{proof}

By Lemma 4 there exists a constant $a_1>r_0$ such that
\begin{equation}
r_k\le a_1\quad\forall k\in\Z^+.
\end{equation}

\begin{lem} 
$\gamma_k\in\2{B}_{a_1^2}\quad\forall k\in\Z^+$.
\end{lem}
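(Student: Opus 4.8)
The plan is to argue by contradiction using the ``radial width'' condition (4) together with the uniform bound $r_k\le a_1$ from (46). Suppose the lemma fails, so that for some $k$ there is a point $y\in\gamma_k$ with $|y|>a_1^2$. Since $r_k=\min_{x\in\gamma_k}|x|\le a_1$ and $a_1>r_0>1$ forces $a_1<a_1^2$, the continuous function $\phi\mapsto|\gamma_k(\phi)|$ takes a value $\le a_1$ and a value $>a_1^2$. By the intermediate value theorem I can then extract a sub-arc $\sigma$ of $\gamma_k$ along which the radial coordinate increases monotonically from $a_1$ to $a_1^2$ and stays in the closed annulus $\2{B}_{a_1^2}\setminus B_{a_1}$.

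Every point of $\sigma$ satisfies $|x|\ge a_1>r_0$, so the lower bound in (1) is available on $\sigma$. Writing the length in polar coordinates exactly as in (24) and (40) and discarding the angular part, I would estimate
\begin{equation*}
L(\gamma_k)\ge\int_\sigma\sqrt{\lambda_1(r)}\sqrt{\left(\frac{dr}{d\phi}\right)^2+r^2\left(\frac{d\theta}{d\phi}\right)^2}\,d\phi\ge\int_\sigma\sqrt{\lambda_1(r)}\left|\frac{dr}{d\phi}\right|\,d\phi=\int_{a_1}^{a_1^2}\sqrt{\lambda_1(\rho)}\,d\rho\ge b_2,
\end{equation*}
where the change of variables in the middle uses the monotonicity of $r$ along $\sigma$ and the final inequality is (4) applied with $r=a_1\ge r_0$.

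It remains to turn this length bound into a lower bound for $I(\gamma_k)$. Since $\gamma_k$ splits $\R^2$ into the regions inside and outside it, whose $g$-areas sum to the total area, $A_{in}(\gamma_k)+A_{out}(\gamma_k)=A$, and the arithmetic--geometric mean inequality gives $A_{in}(\gamma_k)A_{out}(\gamma_k)\le A^2/4$. As in (31) this yields
\begin{equation*}
\frac{1}{A_{in}(\gamma_k)}+\frac{1}{A_{out}(\gamma_k)}=\frac{A}{A_{in}(\gamma_k)A_{out}(\gamma_k)}\ge\frac{4}{A},
\end{equation*}
so that $I(\gamma_k)\ge 4b_2/A\ge b_0$ by the choice $b_0=\min(b_1,4b_2/A)$. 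This contradicts \eqref{sequence-limit}, and hence $\gamma_k\subset\2{B}_{a_1^2}$ for every $k$.

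I expect the only point requiring care to be the extraction of the monotone radial sub-arc $\sigma$ and the verification that its $g$-length dominates the radial integral $\int_{a_1}^{a_1^2}\sqrt{\lambda_1(\rho)}\,d\rho$; the rest is the elementary area-product estimate already used in (31)--(32). The argument is clean because it only uses that the escaping curve must traverse the \emph{fixed} annulus $\2{B}_{a_1^2}\setminus B_{a_1}$, whose radial width is exactly the range calibrated by condition (4).
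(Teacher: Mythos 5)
Your proof is correct and follows essentially the same route as the paper's: assume some point of $\gamma_k$ escapes $\2{B}_{a_1^2}$, use (1), (4) and the bound $r_k\le a_1$ to get $L(\gamma_k)\ge\int_{a_1}^{a_1^2}\sqrt{\lambda_1(\rho)}\,d\rho\ge b_2$, then combine with $A_{in}(\gamma_k)A_{out}(\gamma_k)\le (A/2)^2$ to force $I(\gamma_k)\ge 4b_2/A\ge b_0$, contradicting \eqref{sequence-limit}. (The paper phrases the contradiction by passing to a subsequence and letting $k\to\infty$; your single-$k$ version is equally valid, since \eqref{sequence-limit} asserts $I(\gamma_k)<b_0$ for \emph{every} $k$.) One technical caveat: the intermediate value theorem does not let you extract a sub-arc on which the radial coordinate is \emph{monotone} --- $|\gamma_k(\phi)|$ may fail to be monotone on every subinterval. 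But monotonicity is not needed: let $\phi_1$ be the last parameter before reaching $y$ with $|\gamma_k(\phi_1)|=a_1$ and $\phi_2$ the first parameter after $\phi_1$ with $|\gamma_k(\phi_2)|=a_1^2$; on $[\phi_1,\phi_2]$ the curve stays in the annulus $\2{B}_{a_1^2}\setminus B_{a_1}$ (so the lower bound in (1) applies, as $a_1>r_0$), and with $F(r)=\int_{a_1}^{r}\sqrt{\lambda_1(\rho)}\,d\rho$ one has $\int_{\phi_1}^{\phi_2}\sqrt{\lambda_1(r)}\,\bigl|\tfrac{dr}{d\phi}\bigr|\,d\phi\ge F(r(\phi_2))-F(r(\phi_1))=\int_{a_1}^{a_1^2}\sqrt{\lambda_1(\rho)}\,d\rho$, which is all your estimate requires. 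With that one-line repair your argument is complete and matches the paper's.
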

\begin{proof} Let $\rho_k=\max_{\gamma_k}|x|$. Suppose the lemma does not hold. 
Then there exists a subsequence of $\rho_k$ which we may assume without loss 
of generality to be the sequence itself such that
\begin{equation}
\rho_k>a_1^2\quad\forall k\in\Z^+.
\end{equation}
By (1), (4), (46), (47) and an argument similar to the proof of (24),
\begin{equation}
L(\gamma_k)\ge\int_{a_1}^{a_1^2}\sqrt{\lambda_1(\rho)}\,d\rho\ge b_2
\quad\forall k\in\Z^+.
\end{equation}
Hence by (48),
\begin{align*}
&I(\gamma_k)=\frac{AL(\gamma_k)}{A_{in}(\gamma_k)A_{out}(\gamma_k)}
\ge\frac{Ab_2}{(A/2)^2}=\frac{4b_2}{A}\quad\forall k\in\Z^+\\
\Rightarrow\quad&I\ge\frac{4b_2}{A}\quad\mbox{ as }k\to\infty.
\end{align*}
This contradicts \eqref{isoperimetric-ratio-condition} and the definition of
$b_0$. Hence the lemma follows.
\end{proof} 

Let $L_k=L(\gamma_k)$. Since $\2{B}_{a_1^2}$ is compact, there exists 
constants $c_2>c_1>0$ such that
\begin{equation}
c_1\delta_{ij}\le g_{ij}\le c_2\delta_{ij}\quad\mbox{ on }
\2{B}_{a_1^2}.
\end{equation}

\begin{lem}\label{Lk-lower-bd} 
There exists a constant $\delta_1>0$ such that $L_k\ge\delta_1\quad
\forall k\in\Z^+$.
\end{lem}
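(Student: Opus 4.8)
The plan is to exploit that, by Lemma 5, the whole curve $\gamma_k$ together with the bounded region it encloses lives inside the fixed compact ball $\2{B}_{a_1^2}$, on which the metric is uniformly comparable to the Euclidean one by (50). First I would record the key containment: since $\gamma_k\subset\2{B}_{a_1^2}$ and $\Omega_k$ is the bounded component of $\R^2\setminus\gamma_k$, every point outside the disk $\2{B}_{a_1^2}$ can be joined to infinity without crossing $\gamma_k$, so it lies in the unbounded component; hence $\Omega_k\subset\2{B}_{a_1^2}$. This is what lets me apply both estimates in (50) to $\gamma_k$ and to the enclosed area $\Omega_k$, with constants $c_1,c_2$ that are independent of $k$.

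Next I would convert the metric quantities into Euclidean ones on this ball. The lower bound in (50) gives $L_k=L(\gamma_k)\ge\sqrt{c_1}\,L_e(\gamma_k)$, while the upper bound gives $\sqrt{\det g_{ij}}\le c_2$ on $\Omega_k$, hence $A_{in}(\gamma_k)=\int_{\Omega_k}\sqrt{\det g_{ij}}\,dx\le c_2|\Omega_k|$. The planar isoperimetric inequality $4\pi|\Omega_k|\le L_e(\gamma_k)^2$ (used already in (41)) then ties these together: $A_{in}(\gamma_k)\le c_2|\Omega_k|\le\frac{c_2}{4\pi}L_e(\gamma_k)^2\le\frac{c_2}{4\pi c_1}L_k^2$.

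With this I would bound the isoperimetric ratio from below in terms of $L_k$ alone. Discarding the nonnegative outer term gives $I(\gamma_k)\ge L_k/A_{in}(\gamma_k)$, and substituting the previous bound $A_{in}(\gamma_k)\le\frac{c_2}{4\pi c_1}L_k^2$ yields $I(\gamma_k)\ge 4\pi c_1/(c_2 L_k)$. Since $I(\gamma_k)<b_0$ for every $k$ by (20), this rearranges to $L_k> 4\pi c_1/(c_2 b_0)$, so the lemma holds with $\delta_1=4\pi c_1/(c_2 b_0)$.

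The argument is entirely local to the fixed compact ball, so I expect no genuine analytic obstacle; the only point requiring a moment's care is the containment $\Omega_k\subset\2{B}_{a_1^2}$, since this is precisely what guarantees that the uniform comparison (50) may be applied to the enclosed area and that $c_1,c_2$ do not depend on $k$. Everything else is a direct chaining of the lower metric bound, the upper metric bound, and the Euclidean isoperimetric inequality.
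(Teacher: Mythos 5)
Your proposal is correct and follows essentially the same route as the paper: both use the metric comparison (49)--(50) on the compact ball $\2{B}_{a_1^2}$, the Euclidean isoperimetric inequality $4\pi|\Omega_k|\le L_e(\gamma_k)^2$, and the bound $I(\gamma_k)<b_0$ to conclude $L_k\ge 4\pi c_1/(c_2b_0)$, arriving at exactly the same constant. Your explicit verification that $\Omega_k\subset\2{B}_{a_1^2}$ is a point the paper leaves implicit, but it is the identical argument.
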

\begin{proof} 
By (49),
\begin{equation}\label{g-euclidean-compare}
\left\{\begin{aligned}
&c_1^{\frac{1}{2}}L_e(\gamma_k)\le L_k\le c_2^{\frac{1}{2}}
L_e(\gamma_k)\quad\forall k\in\Z^+\\
&c_1|\Omega_k|\le A_{in}(\gamma_k)\le c_2|\Omega_k|\quad\forall k\in\Z^+.
\end{aligned}\right.
\end{equation}
By \eqref{sequence-limit}, (41) and \eqref{g-euclidean-compare},
\begin{align*}
&b_0>\frac{L_k}{A_{in}(\gamma_k)}\ge\frac{c_1^{\frac{1}{2}}L_e(\gamma_k)}
{c_2|\Omega_k|}\ge\frac{c_1^{\frac{1}{2}}}{c_2}\cdot\frac{L_e(\gamma_k)}
{(L_e(\gamma_k)^2/4\pi)}\ge\frac{4\pi c_1^{\frac{1}{2}}}{c_2L_e(\gamma_k)}
\quad\forall k\in\Z^+\\
\Rightarrow\quad&L_k\ge c_1^{\frac{1}{2}}L_e(\gamma_k)
\ge\frac{4\pi c_1}{c_2b_0}\quad\forall k\in\Z^+
\end{align*}
and the lemma follows.
\end{proof}

By the proof of Lemma \ref{Lk-lower-bd} we have the following corollary.

\begin{cor}
For any constant $C_1>0$ there exists a constant $\delta_1>0$ such that 
\begin{equation*}
L(\gamma)>\delta_1
\end{equation*}
for any simple closed curve $\gamma\subset\2{B}_{a_1^2}$ satisfying
\begin{equation}\label{isoperimetric-ratio-upper-bd}
I(\gamma)<C_1.
\end{equation}
\end{cor}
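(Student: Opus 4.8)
The plan is to transcribe the computation in the proof of Lemma~\ref{Lk-lower-bd} almost word for word, with the generic constant $C_1$ in place of $b_0$ and an arbitrary simple closed curve $\gamma\subset\2{B}_{a_1^2}$ with $I(\gamma)<C_1$ in place of a member $\gamma_k$ of the minimizing sequence. The reason this works is that the estimate in Lemma~\ref{Lk-lower-bd} never exploited any minimizing property of $\{\gamma_k\}$; it used only that the curve sits inside the fixed compact ball $\2{B}_{a_1^2}$, the comparison (49) of $g$ with the Euclidean metric there, the Euclidean isoperimetric inequality (41), and an upper bound on the isoperimetric ratio. All of these are available under the hypotheses of the corollary.

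Concretely, first I would use (49) to compare $g$ with the Euclidean metric on $\2{B}_{a_1^2}$. Letting $\Omega$ denote the region enclosed by $\gamma$, $|\Omega|$ its Euclidean area and $L_e(\gamma)$ its Euclidean length, the same reasoning that yielded \eqref{g-euclidean-compare} gives $c_1^{1/2}L_e(\gamma)\le L(\gamma)$ and $A_{in}(\gamma)\le c_2|\Omega|$. Then, feeding in the Euclidean isoperimetric inequality $4\pi|\Omega|\le L_e(\gamma)^2$ together with the hypothesis $I(\gamma)<C_1$ and the trivial bound $I(\gamma)\ge L(\gamma)/A_{in}(\gamma)$, I would chain the inequalities exactly as in the proof of Lemma~\ref{Lk-lower-bd}:
\begin{equation*}
C_1>\frac{L(\gamma)}{A_{in}(\gamma)}\ge\frac{c_1^{1/2}L_e(\gamma)}{c_2|\Omega|}
\ge\frac{c_1^{1/2}}{c_2}\cdot\frac{L_e(\gamma)}{L_e(\gamma)^2/4\pi}
=\frac{4\pi c_1^{1/2}}{c_2L_e(\gamma)}.
\end{equation*}

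Solving this for $L_e(\gamma)$ gives $L_e(\gamma)>4\pi c_1^{1/2}/(c_2C_1)$, and then the length comparison $L(\gamma)\ge c_1^{1/2}L_e(\gamma)$ produces the explicit lower bound $L(\gamma)>4\pi c_1/(c_2C_1)$, so I may take $\delta_1=4\pi c_1/(c_2C_1)$. I do not expect a genuine obstacle here, since every step duplicates the computation already carried out for the minimizing sequence. The single point deserving a moment's attention is that the constants $c_1,c_2$ in (49) are fixed by the compact ball $\2{B}_{a_1^2}$ alone and do not depend on the particular curve $\gamma$; this is exactly what guarantees that the resulting $\delta_1$ depends only on $C_1$ (together with the already-fixed $a_1$, $c_1$, $c_2$), as claimed in the statement.
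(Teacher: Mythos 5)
Your proposal is correct and coincides with the paper's own argument: the paper proves this corollary simply by remarking that the proof of Lemma 6 applies verbatim, since that proof used only the inclusion $\gamma\subset\2{B}_{a_1^2}$, the metric comparison (49), the Euclidean isoperimetric inequality (41), and an upper bound on the isoperimetric ratio, never any minimizing property of the sequence $\{\gamma_k\}$. Your explicit constant $\delta_1=4\pi c_1/(c_2C_1)$ is exactly the bound obtained there with $b_0$ replaced by $C_1$.
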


By \eqref{isoperimetric-ratio} and Corollary 7 we have the following corollary.

\begin{cor}
For any constant $C_1>0$ there exists a constant $\delta_2>0$ such that 
\begin{equation*}
A_{in}(\gamma)>\delta_2\quad\mbox{ and }\quad A_{out}(\gamma)>\delta_2
\end{equation*}
for any simple closed curve $\gamma\subset\2{B}_{a_1^2}$ satisfying 
\eqref{isoperimetric-ratio-upper-bd}.
\end{cor}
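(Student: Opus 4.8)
The plan is to obtain the two area bounds directly from the definition \eqref{isoperimetric-ratio} of the isoperimetric ratio together with the length bound already established in Corollary 7. First I would apply Corollary 7 to the given constant $C_1$ to produce a constant $\delta_1>0$ such that every simple closed curve $\gamma\subset\2{B}_{a_1^2}$ with $I(\gamma)<C_1$ satisfies $L(\gamma)>\delta_1$.

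Next I would exploit the fact that the two summands in \eqref{isoperimetric-ratio} are both strictly positive. Writing
$$I(\gamma)=\frac{L(\gamma)}{A_{in}(\gamma)}+\frac{L(\gamma)}{A_{out}(\gamma)}<C_1,$$
each term is less than the total, so that $L(\gamma)/A_{in}(\gamma)<C_1$ and $L(\gamma)/A_{out}(\gamma)<C_1$ hold separately. Rearranging these two inequalities and inserting the lower bound $L(\gamma)>\delta_1$ from the first step gives
$$A_{in}(\gamma)>\frac{L(\gamma)}{C_1}>\frac{\delta_1}{C_1}\quad\mbox{and}\quad A_{out}(\gamma)>\frac{L(\gamma)}{C_1}>\frac{\delta_1}{C_1}.$$
Hence the single choice $\delta_2=\delta_1/C_1$ bounds both areas from below simultaneously, which is precisely the assertion.

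I do not expect any genuine obstacle here: all of the analytic content is concentrated in Corollary 7, and the present statement is merely its reformulation, converting a lower bound on length into lower bounds on the two areas via the elementary observation that in a sum of positive quantities each summand is dominated by the whole sum. The only minor point to record is that $A_{in}(\gamma)$ and $A_{out}(\gamma)$ are automatically positive for a simple closed curve in $\R^2$, since such a curve bounds a nonempty interior and exterior of positive $g$-area; this is exactly what makes $I(\gamma)$ finite and the divisions above legitimate.
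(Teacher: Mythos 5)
Your proof is correct and matches the paper's intent exactly: the paper derives this corollary in one line ``by \eqref{isoperimetric-ratio} and Corollary 7,'' which is precisely your argument of bounding each positive summand of $I(\gamma)$ by $C_1$ and combining with $L(\gamma)>\delta_1$ to get $A_{in}(\gamma),A_{out}(\gamma)>\delta_1/C_1$. Nothing is missing; you have simply written out the elementary step the paper leaves implicit.
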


\begin{lem}
There exists a constant $C_2>0$ such that the following holds. Suppose
$\beta\subset\2{B}_{a_1^2}$ is a closed simple curve. Then 
under the curve shrinking flow
\begin{equation}\label{csf}
\frac{\1\beta}{\1\tau}(s,\tau)=k\vec{N}
\end{equation}
with $\beta (s,0)=\beta (s)$ where for each $\tau\ge 0$, $k(\cdot,\tau)$ is 
the curvature, $\vec{N}$ is the unit inner normal, and $s$ is the arc length 
of the curve $\beta(\cdot,\tau)$ with respect to the metric $g$, 
there exists $\tau_0\ge 0$ such that the curve $\beta^{\tau_0}
=\beta(\cdot,\tau_0)\subset \2{B}_{a_1^2}$ satisfies $I(\beta^{\tau_0})
\le I(\beta)$ and
$$
\int k(s,\tau_0)^2\,ds\le C_2.
$$
\end{lem}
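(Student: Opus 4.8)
The plan is to run the curve shrinking flow \eqref{csf} starting from $\beta$ and to locate a time $\tau_0$ at which the total squared curvature has been forced down to a fixed level by the dissipation inherent in the flow, while the isoperimetric ratio has not increased. Throughout I use the upper bound $I(\beta)<b_0$ available for the curves of our minimizing sequence, so the constant $C_2$ produced will depend on $b_0$ (equivalently on $b_1$, $b_2$, $A$ and the area lower bound $\delta_2$ of Corollary 8). First I record the standard analytic input: by the Gage--Hamilton--Grayson theory, adapted to the smooth Riemannian surface $(\R^2,g)$ restricted to the compact disk $\overline{B}_{a_1^2}$, the flow \eqref{csf} exists, stays smooth and embedded on a maximal interval $[0,T^*)$, and the maps $\tau\mapsto L(\tau),A_{in}(\tau),A_{out}(\tau),\int k^2\,ds$ are smooth. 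The two variational identities I need are
\begin{equation*}
\frac{dL}{d\tau}=-\int k^2\,ds,\qquad
\frac{dA_{in}}{d\tau}=-\int k\,ds=-\Bigl(2\pi-\int_{\Omega}K\,dA\Bigr),
\end{equation*}
the second by Gauss--Bonnet for the disk $\Omega$ enclosed by $\beta(\cdot,\tau)$, where $K$ is the Gaussian curvature of $g$; since $A_{in}+A_{out}=A$ is constant, $dA_{out}/d\tau=-dA_{in}/d\tau$.

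The heart of the argument is a dichotomy for $I=LA/(A_{in}A_{out})$. Taking the logarithmic derivative and inserting the identities above gives
\begin{equation*}
\frac{1}{I}\frac{dI}{d\tau}
=-\frac{\int k^2\,ds}{L}
+\Bigl(2\pi-\int_{\Omega}K\,dA\Bigr)\Bigl(\frac{1}{A_{in}}-\frac{1}{A_{out}}\Bigr).
\end{equation*}
On the compact disk $|K|\le K_0:=\sup_{\overline{B}_{a_1^2}}|K|$, so $|2\pi-\int_{\Omega}K\,dA|\le 2\pi+K_0A=:M$. As long as $I<b_0$, Corollary 8 (with $C_1=b_0$) gives $A_{in},A_{out}>\delta_2$, whence the bracketed area difference is bounded by $1/\delta_2$; and $L=I A_{in}A_{out}/A\le b_0A/4$ since $A_{in}A_{out}\le A^2/4$. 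Setting $C_2:=Mb_0A/(4\delta_2)$, I conclude that, at any time with $I<b_0$,
\begin{equation*}
\int k^2\,ds>C_2\ \Longrightarrow\ \frac{dI}{d\tau}<0 .
\end{equation*}

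With this $C_2$ fixed, choose $\tau_0$ as follows. If $\int k^2(\cdot,0)\,ds\le C_2$, take $\tau_0=0$; then trivially $I(\beta^{\tau_0})=I(\beta)$ and the curvature bound holds. Otherwise let $\tau_0$ be the first time at which $\int k^2\,ds=C_2$. On $[0,\tau_0)$ we have $\int k^2\,ds>C_2$, so by the displayed implication $I$ is strictly decreasing there; a continuity argument then keeps $I<b_0$ on $[0,\tau_0)$, so the area lower bounds of Corollary 8 persist, the curve neither collapses (its enclosed and exterior areas stay $>\delta_2$) nor, by the barrier argument below, leaves the disk, and the flow remains smooth and embedded up to and including $\tau_0$. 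That $\tau_0$ is actually attained in finite time follows from the dissipation identity: integrating $dL/d\tau=-\int k^2\,ds$ gives $\int_0^{\tau_0}\bigl(\int k^2\,ds\bigr)d\tau=L(0)-L(\tau_0)\le L(0)\le b_0A/4$, while $\int k^2\,ds>C_2$ on $[0,\tau_0)$ forces $\tau_0\le (b_0A/4)/C_2=\delta_2/M<\infty$. Since $I$ is strictly decreasing on $[0,\tau_0)$ we get $I(\beta^{\tau_0})\le I(\beta)$, and by construction $\int k^2(\cdot,\tau_0)\,ds=C_2$, completing the proof.

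The step I expect to be the main obstacle is confinement of the flow to $\overline{B}_{a_1^2}$, which is exactly what makes Corollary 8 and the curvature bound $K_0$ applicable: this should be established by the avoidance principle, comparing $\beta(\cdot,\tau)$ with suitable barrier curves (e.g. the evolving Euclidean or geodesic circles $\partial B_r$), and is the place where the geometry of $g$ on the compact disk genuinely enters. A secondary technical point is the clean justification of the bootstrap ($I<b_0$ on $[0,\tau_0)$) together with the smooth-until-$\tau_0$ regularity, both of which rest on Grayson's theorem that an embedded curve stays embedded and can only become singular by collapsing to a point --- an event excluded here by the uniform area lower bound $\delta_2$.
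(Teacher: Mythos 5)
Your proof is correct and is essentially the paper's own argument: the same logarithmic-derivative identity for $I$, the same Gauss--Bonnet/compactness bound on $\int k\,ds$, the lower area bounds from Corollary 8, a first time $\tau_0$ at which $\int k^2\,ds$ drops to the level $C_2$, and Grayson's theorem to exclude a singularity (collapse to a point, forbidden by $A_{in}>\delta_2$) occurring before that level is reached; even the confinement $\beta^{\tau}\subset\overline{B}_{a_1^2}$ that you flag as the main remaining obstacle is merely asserted, not proved, in the paper (its equation (53)). The only substantive difference is your standing assumption $I(\beta)<b_0$, where the paper instead takes the threshold $C_1=2I(\beta)$ in (56): this formally narrows the lemma to curves like those of the minimizing sequence (which is all that is used in the proof of Theorem 1), and in return yields a constant $C_2$ depending only on $b_0$, $\delta_2$, $A$ and $\sup_{\overline{B}_{a_1^2}}|K|$ rather than on $\beta$ --- which is in fact what the lemma's quantifier order (``there exists $C_2$ such that for every $\beta$'') requires, and which the paper's own choice of threshold does not deliver.
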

\begin{proof}
Since the proof is similar to the proof of \cite{DH} and the Lemma on P.197 
of \cite{H2}, we will only sketch the proof here. Let $\beta^{\tau}
=\beta(\cdot,\tau)$ and write
$$
L(\tau)=L_g(\beta (\cdot,\tau)),\,\, I(\tau)=I(\beta^{\tau})
=I_g(\beta (\cdot,\tau)),
$$
and the areas
$$
A_{in}(\tau)=A_{in}(\beta (\cdot,\tau)),\,\, A_{out}(\tau)
=A_{out}(\beta (\cdot,\tau)),
$$
with respect to the metric $g$. Let $T_1>0$ be the maximal existence time
of the solution of \eqref{csf}. Then
\begin{equation}\label{beta-bd-set}
\beta^{\tau}\subset\2{B}_{a_1^2}\quad\forall 0\le\tau<T_1.
\end{equation}
Similar to the result on P.196 of \cite{H2} we have
\begin{equation}\label{csf-eqns}
\frac{\1 A_{in}}{\1\tau}=-\int k\,ds, \quad \frac{\1 A_{out}}{\1\tau}
=\int k\,ds,\quad \frac{\1 L}{\1\tau}=-\int k^2\,ds
\end{equation}
and
\begin{equation}\label{Gauss-Bonnet}
\int k\,ds+\int_{\Omega(\tau)} KdV_g=2\pi
\end{equation}
by the Gauss-Bonnet theorem where $K$ is the Gauss curvature with respect
to $g$ and $\Omega(\tau)\subset\2{B}_{a_1^2}$ is the region enclosed 
by the curve 
$\beta (s,\tau)$. Let $C_1=2I(\beta)$. By continuity there exists a constant
$0<\delta_0<T_1$ such that 
\begin{equation}\label{compact}
I(\tau)<C_1\quad\forall 0\le\tau\le\delta_0.
\end{equation}
By (56), Corollary 7, and Corollary 8 there exist constants $\delta_1>0$, 
$\delta_2>0$, such that
\begin{equation}\label{lower-bds}
L(\tau)>\delta_1,\quad A_{in}(\tau)>\delta_2,\quad A_{out}(\tau)>\delta_2
\quad\forall 0\le\tau\le\delta_0.
\end{equation}
Now
\begin{equation}\label{logL-eqn}
\frac{\1}{\1\tau}(\log I(\tau))=\frac{1}{L}\frac{\1 L}{\1\tau}
-\frac{1}{A_{in}}\frac{\1 A_{in}}{\1\tau}-\frac{1}{A_{out}}\frac{\1 A_{out}}
{\1\tau}+\frac{1}{A}\frac{\1 A}{\1\tau}.
\end{equation}
By \eqref{beta-bd-set} and \eqref{Gauss-Bonnet} $\int k\,ds$ is uniformly 
bounded for all $0\le\tau<T_1$.
Then by \eqref{csf-eqns}, \eqref{Gauss-Bonnet}, \eqref{lower-bds},
and \eqref{logL-eqn}, there
exists a constant $C_2>0$ independent of $\delta_0$ such that
\begin{equation*}
\frac{\1}{\1\tau}(\log I(\tau))<0 
\end{equation*}
for any $\tau\in (0,\delta_0]$ satisfying 
\begin{equation*}
\int k(s,\tau)^2\,ds>C_2.
\end{equation*}
If 
$$
\int k(s,0)^2\,ds\le C_2,
$$
we set $\tau_0=0$ and we are done. If 
$$
\int k(s,0)^2\,ds>C_2,
$$
then either there exists $\tau_0\in (0,\delta_0]$ such that 
\begin{equation}\label{csf-case1}
\int k(s,\tau_0)^2\,ds=C_2\quad\mbox{ and }\quad \int k(s,\tau)^2\,ds>C_2
\quad\forall 0\le\tau<\tau_0
\end{equation}
or 
\begin{equation}\label{csf-case2}
\int k(s,\tau)^2\,ds>C_2\quad\forall 0\le\tau\le\delta_0.
\end{equation}
If \eqref{csf-case1} holds, since $I(\tau_0)\le I(0)$ we are done. 
If \eqref{csf-case2} holds, since $I(\delta_0)\le I(0)$ we can
repeat the above the argument a finite number of times. Then either 

\begin{flushleft}
(a) there exists $\tau_0\in (0,T_1)$ such that \eqref{csf-case1} holds 
\end{flushleft}
or 
\begin{flushleft}
(b) 
\begin{equation}\label{csf-case3}
\int k(s,\tau)^2\,ds>C_2\quad\forall 0\le\tau<T_1
\end{equation} 
holds. 
\end{flushleft}

If (b) holds, then similar to the proof of the Lemma on P.197
of \cite{H2} by \eqref{lower-bds} we get a contradiction to the 
Grayson theorem (\cite{H2},\cite{Gr1},\cite{Gr2}) for curve shortening 
flow. Hence (a) holds. Since $I(\tau_0)\le I(0)$, the lemma follows.
\end{proof}

To complete the proof of Theorem 1 we also need the following technical 
lemma.

\begin{lem}
For any positive numbers $\alpha_1,\alpha_2,A_1,A_2,A_3$ we have
\begin{equation}\label{algebraic-ineqn}
(\alpha_1+\alpha_2)\left(\frac{1}{A_2}+\frac{1}{A_1+A_3}\right)
\ge\min\left\{\alpha_1\left(\frac{1}{A_1}+\frac{1}{A_2+A_3}\right),
\alpha_2\left(\frac{1}{A_3}+\frac{1}{A_1+A_2}\right)\right\}.
\end{equation}
\end{lem}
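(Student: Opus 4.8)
The plan is to argue by contradiction after first compressing the notation. Writing
\[
P=\frac{1}{A_2}+\frac{1}{A_1+A_3},\qquad
Q_1=\frac{1}{A_1}+\frac{1}{A_2+A_3},\qquad
Q_2=\frac{1}{A_3}+\frac{1}{A_1+A_2},
\]
the asserted inequality reads $(\alpha_1+\alpha_2)P\ge\min\{\alpha_1Q_1,\alpha_2Q_2\}$. With $S=A_1+A_2+A_3$ each of these three quantities is an isoperimetric ratio of the shape $\ell\,(1/a+1/(S-a))$, for inside area $a=A_2,A_1,A_3$ respectively, which explains the form of the statement. I would therefore suppose, for contradiction, that both $(\alpha_1+\alpha_2)P<\alpha_1Q_1$ and $(\alpha_1+\alpha_2)P<\alpha_2Q_2$ hold.

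The first step is to clear the lengths $\alpha_1,\alpha_2$. Since $(\alpha_1+\alpha_2)P>\alpha_1P$ and $(\alpha_1+\alpha_2)P>\alpha_2P$, the two assumed inequalities force $Q_1>P$ and $Q_2>P$ and may be rewritten as $\alpha_1(Q_1-P)>\alpha_2P$ and $\alpha_2(Q_2-P)>\alpha_1P$. All four sides are positive, so multiplying these two inequalities and cancelling the common factor $\alpha_1\alpha_2>0$ gives
\[
(Q_1-P)(Q_2-P)>P^2.
\]
This multiplication is the crucial move: it removes every trace of $\alpha_1,\alpha_2$ and reduces the claim to a pure inequality among $P,Q_1,Q_2$.

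It then remains only to contradict this, by showing that in fact $(Q_1-P)(Q_2-P)\le P^2$ holds unconditionally. Expanding and cancelling $P^2$, this is equivalent to $Q_1Q_2\le P(Q_1+Q_2)$, and hence, after dividing by $PQ_1Q_2>0$, to the clean harmonic-sum inequality
\[
\frac{1}{Q_1}+\frac{1}{Q_2}\ge\frac{1}{P}.
\]
To verify this I would place everything over the common denominator $S=A_1+A_2+A_3$, using $1/P=A_2(A_1+A_3)/S$, $1/Q_1=A_1(A_2+A_3)/S$ and $1/Q_2=A_3(A_1+A_2)/S$; the inequality then collapses to $A_1(A_2+A_3)+A_3(A_1+A_2)\ge A_2(A_1+A_3)$, which after cancellation is simply $2A_1A_3\ge0$ and is therefore trivially true. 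This contradicts the strict inequality above and proves the lemma. The only real content lies in recognizing that, once the coefficients are cleared by multiplication, the min-inequality is equivalent to $1/Q_1+1/Q_2\ge1/P$; after that reformulation the computation is immediate, so I anticipate no genuine obstacle beyond locating this reduction.
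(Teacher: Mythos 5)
Your proof is correct, and it eliminates the lengths $\alpha_1,\alpha_2$ by a different mechanism than the paper. The paper also argues by contradiction, but instead of multiplying the two negated inequalities it rewrites them as $\frac{A_1(A_2+A_3)}{A_2(A_1+A_3)}\le\frac{\alpha_1}{\alpha_1+\alpha_2}$ and $\frac{A_3(A_1+A_2)}{A_2(A_1+A_3)}\le\frac{\alpha_2}{\alpha_1+\alpha_2}$ and \emph{adds} them: since the right-hand sides sum to $1$, this collapses at once to $\frac{2A_1A_3}{A_2(A_1+A_3)}\le 0$, which is impossible for positive $A_i$. Your multiplication step costs one extra move --- you must then establish the unconditional inequality $(Q_1-P)(Q_2-P)\le P^2$, equivalently $1/Q_1+1/Q_2\ge 1/P$ --- so the paper's route is slightly shorter. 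What your route buys is a clean $\alpha$-free reformulation: positive $\alpha_1,\alpha_2$ violating the lemma exist precisely when $Q_1>P$, $Q_2>P$ and $(Q_1-P)(Q_2-P)>P^2$, so the inequality $1/Q_1+1/Q_2\ge 1/P$ (which unwinds to $A_1(A_2+A_3)+A_3(A_1+A_2)\ge A_2(A_1+A_3)$, i.e.\ $2A_1A_3\ge 0$) is exactly the sharp content of the statement quantified over all lengths. Both arguments ultimately rest on the same trivial fact $2A_1A_3\ge 0$; yours isolates it as a standalone superadditivity statement for the reciprocal ratios, while the paper reaches it directly via the convex-combination trick.
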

\begin{proof}
Suppose \eqref{algebraic-ineqn} does not hold. Then
\begin{align}\label{algebraic-ineqn1}
&(\alpha_1+\alpha_2)\left(\frac{1}{A_2}+\frac{1}{A_1+A_3}\right)
\le\alpha_1\left(\frac{1}{A_1}+\frac{1}{A_2+A_3}\right)\nonumber\\
\Rightarrow\quad&\frac{A_1(A_2+A_3)}{A_2(A_1+A_3)}
\le\frac{\alpha_1}{\alpha_1+\alpha_2}
\end{align}
and
\begin{align}\label{algebraic-ineqn2}
&(\alpha_1+\alpha_2)\left(\frac{1}{A_2}+\frac{1}{A_1+A_3}\right)
\le\alpha_2\left(\frac{1}{A_3}+\frac{1}{A_1+A_2}\right)\nonumber\\
\Rightarrow\quad&\frac{A_3(A_1+A_2)}{A_2(A_1+A_3)}
\le\frac{\alpha_2}{\alpha_1+\alpha_2}.
\end{align}
Summing \eqref{algebraic-ineqn1} and \eqref{algebraic-ineqn2},
$$
\frac{2A_1A_3}{A_2(A_1+A_3)}\le 0\quad\Rightarrow\quad A_1=0\mbox{ or }
A_3=0. 
$$
Contradiction arises. Hence  \eqref{algebraic-ineqn} holds and the lemma 
follows.
\end{proof}

We are now ready for the proof of Theorem 1.

\noindent{\it Proof of Theorem 1}:
Since the proof is similar to the proof of \cite{H1} and \cite{H2} we
will only sketch the argument here. Let $C_2>0$ be given by Lemma 9
and $\delta_1>0$ be given by Corollary 7 with $C_1=b_0$.
By Lemma 5, Lemma 6, Corollary 7, Lemma 9 and an argument similar to the 
proof of \cite{H2} for each $j\in\Z^+$ there exists a closed simple curve 
$\2{\gamma}_j\subset\2{B}_{a_1^2}$ satisfying
$$
I(\2{\gamma}_j)\le I(\gamma_j)\quad\mbox{and}\quad L(\2{\gamma}_j)
\ge\delta_1\quad\forall j\in\Z^+
$$
and
\begin{equation}\label{k-l2-uniform-bd}
\int_{\2{\gamma}_j}k^2\,ds\le C_2
\end{equation}
where $k$ is the curvature of $\2{\gamma}_j$. By \eqref{k-l2-uniform-bd}
and the same argument as that on P. 197-199 of \cite{H2} $\2{\gamma}_j$
are locally uniformly bounded in $L_2^1$ and $C^{1+\frac{1}{2}}$. Hence 
$\2{\gamma}_j$ has a sequence which we may assume without loss of 
generality to be the sequence itself that converges uniformly in 
$L_p^1$ for any $1<p<2$ and in $C^{1+\alpha}$ for any 
$0<\alpha<1/2$ as $j\to\infty$ to some closed immersed curve 
$\gamma\subset\2{B}_{a_1^2}$. Moreover $\gamma$ satisfies
$$
I=I(\gamma)\quad\mbox{ and }\quad L(\gamma)\ge\delta_1.
$$ 
Since $\gamma$ is the limit of embedded curves, $\gamma$ cannot cross
itself and at worst it will be self tangent. Suppose $\gamma$ is
self tangent. Without loss of generality we may assume that $\gamma$
is only self tangent at one point. Then $\gamma=\beta_1\cup\beta_2$
with $\beta_1\cap\beta_2$ being a single point 
where $\beta_1$, $\beta_2$, are simple closed curves. Then
$A_{in}(\gamma)=A_{in}(\beta_1)+A_{in}(\beta_2)$, $A_{out}(\beta_1)=
A_{out}(\gamma)+A_{in}(\beta_2)$, $A_{out}(\beta_2)=
A_{out}(\gamma)+A_{in}(\beta_1)$, and $L(\gamma)=L(\beta_1)+L(\beta_2)$. 
Let $L_1=L(\beta_1)$ and $L_2=L(\beta_2)$. By Lemma 10,
\begin{align*}
&(L_1+L_2)\left(\frac{1}{A_{out}(\gamma)}
+\frac{1}{A_{in}(\beta_1)+A_{in}(\beta_2)}\right)\\
\ge&\min\left\{L_1
\left(\frac{1}{A_{in}(\beta_1)}+\frac{1}{A_{out}(\gamma)+A_{in}(\beta_2)}\right),
L_2\left(\frac{1}{A_{in}(\beta_2)}+\frac{1}{A_{out}(\gamma)
+A_{in}(\beta_1)}\right)\right\}.
\end{align*}
Hence
\begin{align*}
L(\gamma)&\left(\frac{1}{A_{out}(\gamma)}+\frac{1}{A_{in}(\gamma)}\right)\\
\ge&\min\left\{L_1\left(\frac{1}{A_{in}(\beta_1)}+\frac{1}{A_{out}(\beta_1)}
\right),
L_2\left(\frac{1}{A_{in}(\beta_2)}+\frac{1}{A_{out}(\beta_2)}\right)\right\}\\
\Rightarrow\quad I(\gamma)\ge&\min (I(\beta_1),I(\beta_2))\\
\Rightarrow\quad I(\gamma)=&\min (I(\beta_1),I(\beta_2)).
\end{align*}
Without loss of generality we may assume that $I(\gamma)=I(\beta_1)$. Then 
$\beta_1$ is a simple closed curve which attains the minimum. Similar to
the proof of \cite{H2}, by a variation argument $\beta_1$ has constant 
curvature
$$
k=L\left(\frac{1}{A_{in}}-\frac{1}{A_{out}}\right).
$$
Hence $\beta_1$ is smooth and the theorem follows.

\hfill$\square$\vspace{6pt}

\end{document}